\numberwithin{equation}{subsection}
\newcounter{tmp}
\newtheorem*{theorem*}{Theorem}
\newtheorem{theorem}{Theorem}[section] 
\newtheorem{lemma}[theorem]{Lemma}
\newtheorem*{lemma*}{Lemma}      
\newtheorem{proposition}[theorem]{Proposition}
\newtheorem*{conj*}{Conjecture}
\newtheorem{conj}{Conjecture}[theorem]
\theoremstyle{definition}
\newtheorem{definition}[theorem]{Definition}
\newtheorem*{definition*}{Definition}
\theoremstyle{remark}
\newtheorem{remark}[theorem]{Remark}
\numberwithin{equation}{section}
\theoremstyle{definition}
\newcommand{\C}{{\mathbb{C}}}
\newcommand{\N}{{\mathbb{N}}}
\newcommand{\PP}{{\mathbb{CP}}}
\newcommand{\M}{{\mathcal{M}}}
\newcommand{\0}{{0}}
\newcommand{\x}{\boldsymbol{x}}
\newcommand{\z}{\boldsymbol{z}}
\DeclareMathOperator{\Span}{Span}
\DeclareMathOperator{\Reg}{Reg}
\DeclareMathOperator{\Sing}{Sing}
\DeclareMathOperator{\ssp}{Spec}
\newcommand{\D}{\mathbb{D}}
\title[]
{Periodic points of weakly post-critically finite all the way down maps} 
\author[ Van Tu Le ]{Van Tu Le}
\address{Van Tu Le\\ Dipartimento di Matematica\\
	Università degli Studi di Roma "Tor Vergata"\\Via della Ricerca Scientifica 1 - 00133 Roma\\ Italy} 
\email{vantule@axp.mat.uniroma2.it}
\date{\today}
\thanks{2020 Mathematics Subject Classification: 32H50, 37F99.}
\thanks{\textit{Keywords: holomorphic dynamics, holomorphic endomorphisms, periodic points, eigenvalues, post-critically finite.} }
\begin{document}
\begin{abstract}
	We study eigenvalues along periodic cycles of post-critically finite endomorphisms of $\PP^n$ in higher dimension. It is a classical result when $n = 1$ that those values are either $0$ or of modulus strictly bigger than $1$. It has been conjectured in [Van Tu Le. Periodic points of post-critically algebraic holomorphic endomorphisms. \textit{Ergodic Theory and Dynamical Systems}, pages 1–33, 2020] that the same result holds for every $n \ge 2$. In this article, we verify the conjecture for the class of weakly post-critically finite all the way down maps which was introduced in [Matthieu Astorg. Dynamics of post-critically finite maps in higher dimension. \textit{Ergodic Theory and Dynamical Systems}, 40(2):289–308, 2020]. This class contains a well-known class of post-critically finite maps constructed in [Sarah Koch. Teichmüller theory and critically finite endomorphisms. \textit{Advances in Mathematics}, 248:573–617, 2013.]. As a consequence, we verify the conjecture for Koch maps.
\end{abstract}

	
\maketitle
\section{Introduction}
Let $f \colon X \to X$ be a holomorphic endomorphism of a connected complex manifold $X$. The critical set $C(f,X)$ consists of points $x$ where the derivative $D_x f$ is not invertible. The post-critical set $PC(f,X)$ is defined as
\[
PC(f,X) = \bigcup\limits_{j \ge 1} f^{\circ j}(C(f,X)).
\]
We shall write $C(f),PC(f)$ instead of $C(f,X), PC(f,X)$ when $X$ is clearly stated. The map $f$ is called \textit{post-critically finite}, or PCF for short, if $PC(f,X)$ has finitely many irreducible components. A point $z \in X$ is a \textit{periodic point of period $m$} if $f^{\circ m}(z) = z$ and $m$ is the smallest positive integer satisfying such a property. An eigenvalue of $f$ along a periodic point $z$ of period $m$ (or along the cycle of $z$) is an eigenvalue of the derivative $D_z f^{\circ m}$.  

When $X = \PP^1$, it is a classical result that the eigenvalue along a periodic cycle of a PCF rational map is either $0$ or of modulus strictly bigger than $1$. The result relies on the local study in a neighborhood of a periodic cycle, which is far from being fully understood in higher dimension. The author proposed the following conjecture generalizing this fact.

\begingroup
\setcounter{tmp}{\value{conj}}
\setcounter{conj}{0} 
\renewcommand\theconj{\Alph{conj}}

\begin{conj}\label{con 1}
	Let $f \colon \PP^n \to \PP^n, n \ge 2$ be a post-critically finite endomorphism of degree $d \ge 2$ and $\lambda$ be an eigenvalue of $f$ along a periodic cycle. Then either $\lambda = 0$ or $|\lambda| >1$.
\end{conj}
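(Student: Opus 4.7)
The plan is to argue by contradiction. Suppose $f \colon \PP^n \to \PP^n$ is PCF of degree $d \ge 2$ and that some eigenvalue $\lambda$ of $f$ along a periodic cycle satisfies $0 < |\lambda| \le 1$. After replacing $f$ by the iterate $f^{\circ m}$, which is again PCF with the same set of eigenvalues along periodic cycles up to raising to powers, I may assume the cycle is a single fixed point $z_0$ and $\lambda$ is an eigenvalue of $A := D_{z_0}f$. The overall strategy is to produce a local open set near $z_0$ on which a forward orbit of an irreducible component of $C(f)$ must accumulate nontrivially, thereby forcing $PC(f)$ to contain infinitely many distinct irreducible components and contradicting the PCF hypothesis.

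The argument naturally splits by the modulus of $\lambda$. When $0 < |\lambda| < 1$, the stable manifold theorem supplies a local holomorphic invariant submanifold $W^{ss}$ through $z_0$, tangent to the strong stable subspace of $A$, on which $z_0$ is an attracting fixed point and a small ball is drawn into $z_0$ by iteration. When $|\lambda| = 1$, Poincar\'e–Dulac normal form theory, applied in the invariant subspace of $A$ associated to the eigenvalues on the unit circle, yields a formal (and in favorable non-resonant situations holomorphic) local invariant submanifold on which $f$ restricts to a map with $z_0$ as a neutral fixed point. In either case I would next try to show that the restriction of $f$ to this invariant submanifold inherits a PCF-type property; once that is done, the classical one-dimensional Fatou--Shishikura dichotomy (applied iteratively down to dimension one, or generalized as in the author's earlier work on PCA maps) would force a critical orbit of $f$ to accumulate on $z_0$, giving the contradiction.

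The main obstacle is the step asserting that the restriction of $f$ to the local invariant submanifold inherits enough post-critical finiteness to apply the low-dimensional dichotomy. In dimension $n = 1$, every non-repelling cycle captures a critical orbit, and this is essentially the only ingredient beyond local dynamics; but in dimension $n \ge 2$ the critical set $C(f)$ is a hypersurface that need not meet a given local invariant submanifold at all, so no purely local statement suffices. A promising global input is the structure of the Green current $T_f$ and the equilibrium measure $\mu_f$: one would hope to show that any non-repelling periodic point of a PCF endomorphism must lie in the closure of $PC(f)$ and that the dynamics on the invariant submanifold is controlled by the restriction of $\mu_f$, so that a component of $C(f)$ is forced to meet the local basin. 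Establishing this implication in full generality is exactly what remains open; it is the reason the present paper proves Conjecture~\ref{con 1} only under the additional weakly PCF all the way down hypothesis, which provides enough structural control to perform an induction on dimension.

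\endgroup
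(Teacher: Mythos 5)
There is a genuine gap here --- and, more fundamentally, a mismatch with what the paper actually establishes. The statement you are attempting is Conjecture~\ref{con 1}, which the paper does \emph{not} prove: it is left open, and the paper's contribution (Theorem~\ref{ thm : CPn}) is the special case where $f$ is weakly PCF all the way down. Your proposal candidly ends by conceding that the key implication ``remains open,'' so what you have written is a research plan, not a proof, and it cannot be accepted as an argument for the conjecture. Concretely, the unproved steps are: (i) in the case $|\lambda|=1$, Poincar\'e--Dulac theory only yields a \emph{formal} invariant submanifold in general, and the resonant or non-linearizable situations are exactly the hard ones, so you do not actually have a holomorphic invariant manifold to restrict to; (ii) even granting such a manifold, there is no mechanism forcing a component of the critical hypersurface $C(f)$ to meet it or to have its orbit accumulate at $z_0$ --- in dimension one this is supplied by the Fatou--Shishikura capture of a critical orbit by every non-repelling cycle, but no higher-dimensional analogue is available when the cycle lies inside $PC(f)$; and (iii) the claim that the restriction of $f$ to the invariant manifold ``inherits a PCF-type property'' is precisely the kind of structural control that the weakly-PCF-all-the-way-down hypothesis is designed to provide and that fails (or is unknown) in general.

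It is also worth noting that your strategy diverges from the route the paper takes for its partial result. The paper does not use local stable/center manifolds at the cycle. For a periodic point $z\notin PC(f)$ (and for the eigenvalue transverse to $PC(f)$ at a regular point), it argues globally: one passes to the universal cover of $X\setminus PC(f)$, lifts the inverse branch fixing $z$, proves normality of its iterates, and derives a contradiction from boundary behavior of a semi-linearizing disk (Theorem~\ref{thm : polarized}). For eigenvalues tangent to an invariant component of $PC(f)$, it runs an induction on dimension through the tower $\Omega_0\supset\Omega_1\supset\cdots$, using two local lemmas about smooth invariant hypersurfaces (Propositions~\ref{prop : preperiodic germs} and~\ref{prop : two smooth tangential invariant hypersurfaces}) to handle fixed points sitting at singular points of $PC(f)$. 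If you want to contribute to the conjecture, the place to focus is the case the paper cannot treat: a periodic point lying on a \emph{singular} periodic component of $PC(f)$, where neither your local-manifold approach nor the paper's smooth-hypersurface lemmas apply.
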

Conjecture \ref{con 1} has been proved when $ n = 2$ and when the cycle is outside the post-critical set in any dimension. The main challenge lies in the case when the cycle is inside the post-critical set, especially when it belongs to a singular invariant irreducible component. However, even when all the periodic components in the post-critical set are smooth, the conjecture remains open. In \cite{astorg2018dynamics}, Astorg introduced a class of so-called \textit{weakly post-critically finite all the way down maps} (see Definition \ref{def : weakly all the way down}). Roughly speaking, this class of maps consists of PCF maps whose post-critical sets have only smooth periodic components and the restrictions of each maps on those periodic components are also weakly PCF all the way down. Using an additional condition, Astorg indirectly verified the conjecture for at least one eigenvalue (see \cite[Theorem B]{astorg2018dynamics}). In this article, we prove the conjecture for the class of weakly PCF all the way down without any condition. 
\begin{theorem}\label{ thm : CPn}
	Let $f \colon \PP^n \to \PP^n, n \ge 2$ be a post-critically finite endomorphism of degree $d \ge 2$ and $\lambda$ be an eigenvalue of $f$ along a periodic cycle. If $f$ is weakly post-critically finite all the way down, then either $\lambda = 0$ or $|\lambda | >1$. 
\end{theorem}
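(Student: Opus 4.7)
The natural strategy is an induction on dimension driven by the nested structure in the definition of \emph{weakly post-critically finite all the way down}. To set up the induction I would strengthen the statement to: for every weakly PCF all the way down endomorphism $g : Y \to Y$ of a smooth projective variety with $\dim Y \le n$, every eigenvalue of $g$ along a periodic cycle is either $0$ or of modulus strictly bigger than $1$. The base case $\dim Y = 1$ is the classical theorem for PCF rational maps on $\PP^1$, and the case $Y = \PP^n$ of the induction step then recovers Theorem~\ref{ thm : CPn}.

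For the inductive step, let $z$ be a periodic point of period $m$ and let $\lambda$ be an eigenvalue of $Df^{\circ m}(z)$. If $z \notin PC(f)$, the existing result mentioned after Conjecture~\ref{con 1} already supplies the dichotomy. Otherwise $z$ lies on an irreducible component $V$ of $PC(f)$. By the weakly PCF all the way down hypothesis, $V$ is smooth and eventually periodic; after replacing $f$ by a suitable iterate (which only raises each eigenvalue to a positive power and hence preserves the dichotomy), I may assume $f(V) = V$. Then $Df^{\circ m}(z)$ preserves the tangent space $T_z V \subset T_z \PP^n$, so its spectrum splits into \emph{tangential} eigenvalues (those of $Df^{\circ m}|_V(z)$) and \emph{normal} eigenvalues (those induced on the quotient $N_z V := T_z \PP^n / T_z V$). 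Since by hypothesis $f|_V : V \to V$ is again weakly PCF all the way down and $\dim V < n$, the inductive hypothesis immediately disposes of the tangential eigenvalues.

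The heart of the proof, and the step I expect to be the main obstacle, is the analysis of the normal eigenvalues. One must exclude any eigenvalue $\lambda$ with $0 < |\lambda| \le 1$ in the normal direction. For a normal eigenvalue with $|\lambda| < 1$ the natural approach is to produce a transverse local stable disk at $z$ on which the dynamics is attracting, and then to invoke the Fatou-type argument in the spirit of the one-dimensional classical proof: critical orbits must accumulate in the basin, contradicting the finiteness of the number of irreducible components of $PC(f)$. The genuinely delicate sub-case is the indifferent one, $|\lambda|=1$, where there is no contraction to exploit. Here I would try to refine Astorg's argument from \cite{astorg2018dynamics} (which yields the dichotomy for at least one eigenvalue) by using the fact that, after induction, the tangential part of $Df^{\circ m}(z)$ is already known to be hyperbolic (no tangential eigenvalue lies in $\{0 < |\lambda| \le 1\}$), effectively reducing the putative indifferent normal eigenvalue to a one-dimensional neutral situation at the innermost component of the nested PCF structure, which the classical $\PP^1$ result then rules out.
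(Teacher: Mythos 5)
Your setup---induction on dimension, reduction to an invariant component $V$ of $PC(f)$, and the splitting of $\ssp(D_zf^{\circ m})$ into tangential and normal parts---matches the paper's skeleton, and the inductive disposal of the tangential eigenvalues is exactly what the paper does (via Remark~\ref{rm : remarks on definition of all the way down }~\ref{item : restriction is also all the way down }). But the part you flag as ``the heart of the proof'' is precisely where your proposal has a genuine gap, on two counts. First, your treatment of the normal eigenvalue only makes sense when $z$ is a \emph{regular} point of $PC(f)$, and there the paper does not use a stable-disk/Fatou-basin argument nor a reduction to a one-dimensional neutral situation: it extends the covering-space argument of \cite{van2020periodic} to polarized endomorphisms of smooth projective algebraic sets (Theorem~\ref{thm : polarized}~\ref{itm : tangential}), lifting the inverse branch to the universal cover of $X \setminus PC(f)$, using normality and a semi-linearizable center manifold to rule out $|\lambda| = 1$ in the quotient direction. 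Your attracting sub-case $0<|\lambda|<1$ is also not justified as stated: in dimension $n\ge 2$ the finiteness of the number of irreducible components of $PC(f)$ does not forbid a critical orbit from accumulating on an attracting cycle, so the one-dimensional contradiction does not transfer.

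Second, and more seriously, you do not address the case $z \in \Sing PC(f)$, i.e.\ when $z$ lies on the intersection of the invariant component $\Gamma$ with another component $\Sigma$. This is where the paper's genuinely new local input lives and it splits into three sub-cases: (i) if $\Sigma$ is also invariant and meets $\Gamma$ transversally, the spectrum splits and induction applies twice; (ii) if $\Sigma$ is invariant and tangent to $\Gamma$ at $z$, Proposition~\ref{prop : two smooth tangential invariant hypersurfaces} (via the identity $\lambda P_d = P_d\circ A$ and Lemma~\ref{lem : semiconjugacy}) expresses the normal eigenvalue as a product of tangential eigenvalues, which are already controlled by induction; (iii) if $f(\Sigma)=\Gamma$ with $\Sigma\neq\Gamma$, Proposition~\ref{prop : preperiodic germs} forces the normal eigenvalue to be $0$ by pushing a Puiseux arc in $\Sigma$ not contained in $\Gamma$ into $\Gamma$. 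Without these local propositions (or substitutes for them) your induction cannot close, because at a singular point of $PC(f)$ there is no single smooth invariant hypersurface whose normal quotient captures the remaining eigenvalue.
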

As a consequence, we are able to verify the conjecture for the class of PCF maps constructed in \cite{koch2013teichmuller}. The special case of eigenvalues at fixed points of Koch maps induced by post-critically finite unicritical polynomials was studied in \cite{le2022fixed}. In \cite{valentin_prep}, Huguin independently studies the dynamics of Koch maps. In particular, he shows explicitly that they are PCF all the way down and obtains more precise information about their eigenvalues along periodic cycles, and thus, also verifies the conjecture for Koch maps.


\textbf{Acknowledgement:} The author would like to thank Valentin Huguin for his comments on an early version of this article and for communicating Lemma \ref{lem : semiconjugacy}. The author would also like to thank his advisors, Xavier Buff and Jasmin Raissy, for introducing him the subject. The author was supported by MIUR Excellence Department Project awarded to the Department of Mathematics, University of Rome Tor Vergata, CUP E83C18000100006.
\section{Eigenvalues of holomorphic germs with smooth invariant hypersurfaces}
We start by proving two local results which will be helpful later and might be of independent interest. Its concern the eigenvalues of a holomorphic germ at $0$ under the presence of an invariant hypersurface which is smooth.

\begin{proposition}\label{prop : preperiodic germs}
	Let $f: (\C^n,\0) \rightarrow (\C^n, \0), \gamma : (\C,0) \to (\C^n,0)$ be non-constant holomorphic germs and $\Gamma$ be a germ of a smooth hypersurface at $0$. Assume that $\Gamma$ is invariant under $f$, i.e. ${f(\Gamma) \subset \Gamma}$, and $\gamma((\C,0)) \not\subset \Gamma, f \circ \gamma  \subset \Gamma$. Then \[\ssp(D_{\0}f) = \ssp(D_{\0} f|_{T_{\0} \Gamma}) \cup \{0\}.\]
\end{proposition}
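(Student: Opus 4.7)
The plan is to flatten $\Gamma$ to a coordinate hyperplane, extract a block-triangular form for $D_{\0} f$ from the invariance, and then use the arc $\gamma$ to show that the unique ``extra'' eigenvalue (the one not already appearing in $\ssp(D_{\0}f|_{T_{\0}\Gamma})$) must vanish.

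First I would use the smoothness of $\Gamma$ to choose holomorphic coordinates $(z_1,\ldots,z_n)$ near $\0$ in which $\Gamma=\{z_n=0\}$. Then the invariance $f(\Gamma)\subset\Gamma$ says that the last component $f_n$ of $f$ vanishes identically on $\{z_n=0\}$, so $f_n$ factors as $f_n=z_n\cdot g$ for some holomorphic germ $g$ at $\0$. In these coordinates $D_{\0}f$ is block upper triangular,
\[
D_{\0} f=\begin{pmatrix} A & v\\ 0 & g(\0)\end{pmatrix},
\]
with $A=D_{\0}f|_{T_{\0}\Gamma}$ (since $T_{\0}\Gamma=\{z_n=0\}$ is stable under $D_{\0}f$) and scalar bottom-right entry $g(\0)=\frac{\partial f_n}{\partial z_n}(\0)$. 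Consequently $\ssp(D_{\0}f)=\ssp(A)\cup\{g(\0)\}$, and the proposition is reduced to showing $g(\0)=0$.

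To obtain $g(\0)=0$ I would bring in the arc $\gamma$. Writing $\gamma(t)=(\gamma_1(t),\ldots,\gamma_n(t))$, the assumption $\gamma((\C,0))\not\subset\Gamma$ forces $\gamma_n\not\equiv 0$ as a germ, while $\gamma(0)=\0$ gives $\gamma_n(0)=0$. The assumption $f\circ\gamma\subset\Gamma$ translates to the germ identity
\[
0\equiv f_n(\gamma(t))=\gamma_n(t)\cdot g(\gamma(t)).
\]
Since the local ring of holomorphic germs in one variable is an integral domain and $\gamma_n\not\equiv 0$, I may cancel to conclude $g\circ\gamma\equiv 0$; evaluating at $t=0$ gives $g(\0)=0$, which finishes the argument.

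I do not foresee any real obstacle here: the proof is essentially just ``flatten, factor, and cancel''. The only substantive ingredients are the smoothness of $\Gamma$ (used both to produce the flattening coordinates and to guarantee the factorization $f_n=z_n g$) and the transversality of $\gamma$ to $\Gamma$ encoded in $\gamma_n\not\equiv 0$ (used for the cancellation at the final step). The underlying geometric picture is transparent: the arc $\gamma$ supplies a direction transverse to $\Gamma$ whose forward image under $f$ still lies in $\Gamma$, which forces the ``normal'' eigenvalue of $D_{\0}f$ to be $0$.
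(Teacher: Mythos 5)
Your proof is correct and follows essentially the same route as the paper: flatten $\Gamma$ to $\{z_n=0\}$, factor the last component as $f_n=z_n g$, use $f\circ\gamma\subset\Gamma$ together with $\gamma_n\not\equiv 0$ to cancel and get $g\circ\gamma\equiv 0$, and evaluate at the origin to conclude the normal eigenvalue $g(\0)$ vanishes. (The paper writes $g=\lambda+h$ with $h=O(\|\x\|)$, but the argument is identical.)
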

\begin{proof}
We can choose a local coordinate $\x = (x_1,\ldots,x_n)$ such that $\Gamma = \{ x_n =0\}$. Since $f(\Gamma) \subset \Gamma$, the germ $f$ has an expansion of the form\[
f(\x) = \left(\begin{matrix}
A & *\\
0 & \lambda
\end{matrix}\right)\x + (g_1(\x),\ldots,g_{n-1}(\x), x_n h(\x))
\]
where $g_i(\x) = O(\|\x\|^2), h(\x) = O(\|\x\|)$ and $A \in \M(n-1,\C)$ such that $\ssp(A) = \ssp(D_0 f|_{T_0 \Gamma})$. We need to show that $\lambda = 0$. 

In the coordinate $\x$, the germ $\gamma \colon (\C,0) \to (\C^n,0)$ is of the form $
\gamma = (\gamma_1 ,\ldots,\gamma_n).$
Since $f \circ \gamma \subset \Gamma$, we have  
\[
\lambda \gamma_n + \gamma_n h\circ \gamma \equiv 0.
\]
The hypothesis $\gamma((\C,0)) \not\subset \Gamma$ implies that  $\gamma_n \not\equiv 0$. Hence, \[\lambda + h \circ \gamma \equiv 0.\] Note that $h,\gamma$ are non-constant and the expansion at $0$ of $f$ and $\gamma$ have no constant terms. Therefore, $\lambda = 0$.


\end{proof}
\begin{proposition}\label{prop : two smooth tangential invariant hypersurfaces} 
	Let $f \colon (\C^{n+1},0) \to (\C^{n+1},0)$ be a proper holomorphic germ and $\Gamma_1,\Gamma_2$ be two distinct germs at $0$ of smooth hypersurfaces which are invariant under $f$ such that $T_0 \Gamma_1 = T_0 \Gamma_2 $. Denote by $\lambda$ the eigenvalue of the linear endomorphism of $T_{0} \C^{n+1} /{T_{0} \Gamma_1}$ induced by $D_{0} f$.  Then there exist an integer $d \ge 1$ and $\mu_1,\ldots,\mu_d \in \ssp(D_{0}f|_{T_{0} \Gamma_1})$ such that 
	\[
	\lambda = \prod_{i =1}^{d} \mu_d.
	\]
\end{proposition}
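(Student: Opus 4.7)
The plan is to reduce the proposition to a statement about the action of a linear map on symmetric powers. First, I would choose a local coordinate $\x = (x_1, \ldots, x_{n+1})$ in which $\Gamma_1 = \{x_{n+1} = 0\}$. Since $\Gamma_2$ is smooth and tangent to $\Gamma_1$ at $\0$, the implicit function theorem lets me write $\Gamma_2 = \{x_{n+1} = \varphi(x_1, \ldots, x_n)\}$ with $\varphi(0) = 0$ and $D_0 \varphi = 0$, so $\varphi$ vanishes to some order $d \ge 2$ at the origin. Because $\Gamma_1 \ne \Gamma_2$, $\varphi \not\equiv 0$, and its leading homogeneous part $P$ is a non-zero polynomial of degree $d$. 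Invariance of $\Gamma_1$ gives $f = (F_1, \ldots, F_n, x_{n+1} H)$ with $H(\0) = \lambda$, while the top-left $n \times n$ block $A$ of $D_0 f$ represents $D_0 f|_{T_0 \Gamma_1}$.

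Next, I would exploit the invariance of $\Gamma_2$, which (as germs at $0$) reads
\[
\varphi(x) \, H\bigl(x, \varphi(x)\bigr) \;=\; \varphi\bigl(F_1(x, \varphi(x)), \ldots, F_n(x, \varphi(x))\bigr).
\]
Since $\varphi(x) = P(x) + O(\|x\|^{d+1})$ and $H(x, \varphi(x)) = \lambda + O(\|x\|)$, the left-hand side equals $\lambda P(x) + O(\|x\|^{d+1})$. For the right-hand side, because $D_0 \varphi = 0$ the map $G_i(x) := F_i(x, \varphi(x))$ satisfies $G_i(x) = (A x)_i + O(\|x\|^2)$, and homogeneity of $P$ of degree $d$ gives $P(G(x)) = P(Ax) + O(\|x\|^{d+1})$. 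Matching leading order-$d$ terms yields the functional equation
\[
P \circ A \;=\; \lambda \, P.
\]

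Finally, I would invoke the elementary linear-algebraic fact that the operator $Q \mapsto Q \circ A$ on the space $S^d\bigl((\C^n)^*\bigr)$ of degree-$d$ homogeneous polynomials has spectrum exactly equal to the set of $d$-fold products of eigenvalues of $A$ (counted with multiplicity). This is seen by conjugating $A$ into upper-triangular form with diagonal entries $\mu_1, \ldots, \mu_n$: the induced action on the monomial basis $\{x^{\alpha}\}_{|\alpha|=d}$ is then upper-triangular with diagonal entries $\prod_i \mu_i^{\alpha_i}$. Since $P \ne 0$ is an eigenvector with eigenvalue $\lambda$, we conclude $\lambda = \mu_{i_1} \cdots \mu_{i_d}$ for some eigenvalues of $A$, which is exactly the claim.

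The bookkeeping around the orders of vanishing in the expansion of both sides of the invariance equation is the only place that requires genuine care, and this is the step I expect to be the most delicate. Once the identity $P \circ A = \lambda P$ is secured, the symmetric-power argument is routine, and the properness hypothesis on $f$ does not actually enter — the proof uses only the invariance of the two smooth hypersurfaces and the fact that they share a tangent space at $\0$.
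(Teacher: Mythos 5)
Your argument is correct and follows the paper's proof essentially step for step: the same choice of coordinates adapted to $\Gamma_1$, the same use of the implicit function theorem to write $\Gamma_2$ as the graph of $\varphi = P + O(\|\x\|^{d+1})$, and the same comparison of the degree-$d$ homogeneous terms in the invariance equation to arrive at $P \circ A = \lambda P$ (and, as you note, properness is indeed never used). The only difference is in the final linear-algebra step, where you triangularize the induced action of $A$ on the space of degree-$d$ homogeneous polynomials in the monomial basis, while the paper proves the same lemma via the polarization of $P$ and a minimal lexicographic multi-index --- two standard renditions of the same fact.
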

Note that $\mu_1,\ldots,\mu_d$ in the statement of Proposition \ref{prop : two smooth tangential invariant hypersurfaces} are not necessarily distinct. 
\begin{proof}
	We can choose a local coordinate $(\x,x_{n+1}) = (x_1,\ldots,x_n,x_{n+1})$ such that $\Gamma_1 = \{x_{n+1} = 0\}$. Since $T_0 \Gamma_2 = T_0 \Gamma_1 = \{x_{n+1} = 0 \}$ and $\Gamma_2 \neq \Gamma_1$, by Implicit function theorem (see \cite[Proposition 1.1.11]{huybrechts2006complex}), there exists a holomorphic germ $\varphi : (\C^n,0) \to (\C,0)$ such that
	\begin{equation}\label{eq : equation two hypersurfaces invariant}
	(\x,x_{n+1}) \in \Gamma_2 \Leftrightarrow  x_{n+1} = \varphi(\x).
	\end{equation} 
Moreover, $\varphi$ has an expansion of the form
\[
	\varphi(\x) = P_d(\x) + O(\|\x\|^{d+1}) 
	\]
	where $P_{d}(\x)$ is a homogeneous polynomial of degree $d$ for some $d \geq 2$.
	
	Since $f (\Gamma_1) \subset \Gamma_1$, $f$ has an expansion of the form
	\[
	f(\x,x_{n+1}) =\left( \begin{matrix}
	A & v \\
	0 & \lambda
	\end{matrix} \right) \left(\begin{matrix}
	\x \\x_{n+1}
	\end{matrix} \right)  + \left( \begin{matrix}
	(g_1,\ldots,g_{n})(\x,x_{n+1})\\
	x_{n+1} h(\x,x_{n+1})
	\end{matrix} \right)
	\]
	where $v \in \C^n, g_i (\x,x_{n+1}) = O(\|(\x,x_{n+1})\|^2), h (\x,x_{n+1}) = O(\|(\x,x_{n+1})\|)$ and $A \in \M(n,\C)$ satisfies that $\ssp(A) = \ssp(D_{0}f|_{T_{0}\Gamma_1})$. Our objective is to relate $\lambda$ and $\ssp(A)$. 
	
Let $(\z,\varphi(\z))$ be a point in $\Gamma_2$. Then $f(\z,\varphi(\z))$ is of the form
\[
f(\z,\varphi(\z)) =\left( \begin{matrix}
A & v \\
0 & \lambda
\end{matrix} \right) \left(\begin{matrix}
\z \\\varphi(\z)
\end{matrix} \right)  + \left( \begin{matrix}
(g_1,\ldots,g_{n})(\z, \varphi(\z))\\
\varphi(z) h(\z,\varphi(\z))
\end{matrix} \right) 
\]
Since $f(\Gamma_2) \subset \Gamma_2$, by \eqref{eq : equation two hypersurfaces invariant}, we have 
\begin{equation}\label{eq : f(Gamma2) in Gamma2}
\lambda \varphi(\z) + \varphi(z) h(\z,\varphi(\z)) = \varphi\big(A \z + v \varphi(\z) + (g_1,\ldots,g_n)(\z,\varphi(\z)) \big).
\end{equation}
Recall that $\varphi(\z) = P_d(\z) + O(\|\z\|^{d+1})$. By comparing the homogeneous terms of degree $d$ in the expansion at $0$ of both sides of \eqref{eq : f(Gamma2) in Gamma2}, we deduce that 
\[
\lambda P_d = P_d \circ A.
\]
Then the conclusion of Proposition \ref{prop : two smooth tangential invariant hypersurfaces} is a direct consequence of the following linear algebra lemma which is due to Huguin.
\begin{lemma}\label{lem : semiconjugacy} 
	Let $A \colon \C^n \to \C^n, n \ge 1$ be a linear endomorphism, $P_d \colon \C^n \to \C$ be a non-vanishing homogeneous polynomial of degree $d \ge 1$ and $\lambda \in \C$ be such that 
	\[
	\lambda P_d = P_d \circ A.
	\] 
	Then there exist $\mu_1,\ldots,\mu_d \in \ssp(A)$ such that 
	\[
	\lambda = \prod_{i =1}^{d} \mu_d.
	\]
\end{lemma}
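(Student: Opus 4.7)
The plan is to recast the hypothesis as an eigenvalue equation on the finite-dimensional vector space $V_d$ of homogeneous polynomials of degree $d$ in $n$ variables. Define $A^{*} \colon V_d \to V_d$ by $(A^{*} Q)(\x) = Q(A\x)$. The identity $\lambda P_d = P_d \circ A$ says exactly that $P_d$ is an eigenvector of $A^{*}$ with eigenvalue $\lambda$, and since $P_d \not\equiv 0$, we have $\lambda \in \ssp(A^{*}|_{V_d})$. It therefore suffices to show that every eigenvalue of $A^{*}$ on $V_d$ is a product of $d$ eigenvalues of $A$, counted with multiplicity.

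To describe $\ssp(A^{*}|_{V_d})$, I would trigonalize $A$: choose a basis $(e_1, \ldots, e_n)$ of $\C^n$ in which $A$ is upper triangular, with diagonal entries $\alpha_1, \ldots, \alpha_n \in \ssp(A)$. Letting $x_1, \ldots, x_n$ be the dual coordinates and $A_{ij}$ the entries of $A$ in this basis, one has
\[
(A^{*} x_i)(\x) = (A\x)_i = \alpha_i x_i + \sum_{j > i} A_{ij}\, x_j.
\]
Order the monomial basis $\{\x^{\beta} = x_1^{\beta_1} \cdots x_n^{\beta_n} : |\beta| = d\}$ of $V_d$ lexicographically. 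Expanding $A^{*} \x^{\beta} = \prod_{i=1}^{n} (A^{*} x_i)^{\beta_i}$ by the multinomial theorem and collecting terms gives
\[
A^{*} \x^{\beta} = \alpha_1^{\beta_1} \cdots \alpha_n^{\beta_n}\, \x^{\beta} + R_\beta,
\]
where $R_\beta$ is a linear combination of monomials of degree $d$ that are strictly lex-smaller than $\x^\beta$ (every replacement of a factor $x_i$ by some $A_{ij} x_j$ with $j > i$ produces such a monomial).

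Consequently, the matrix of $A^{*}$ in the lex-ordered monomial basis is triangular, with diagonal entries $\alpha_1^{\beta_1} \cdots \alpha_n^{\beta_n}$ as $\beta$ ranges over multi-indices with $|\beta| = d$. Hence
\[
\ssp(A^{*}|_{V_d}) \subseteq \bigl\{\alpha_1^{\beta_1} \cdots \alpha_n^{\beta_n} : |\beta| = d\bigr\}.
\]
Applied to $\lambda$, this yields a multi-index $\beta$ with $|\beta| = d$ and $\lambda = \alpha_1^{\beta_1} \cdots \alpha_n^{\beta_n}$. Taking $\mu_1, \ldots, \mu_d$ to be the list in which each $\alpha_i$ appears exactly $\beta_i$ times produces elements of $\ssp(A)$ with $\lambda = \prod_{i=1}^{d} \mu_i$, as required. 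The only nontrivial point is the triangularity of $A^{*}$ in the monomial basis; this is a routine but slightly fussy multinomial expansion, and I do not anticipate a genuine obstacle beyond bookkeeping.
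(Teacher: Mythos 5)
Your proof is correct, but it takes a genuinely different route from the paper's. You linearize the problem by viewing $Q \mapsto Q \circ A$ as an endomorphism $A^{*}$ of the space $V_d$ of degree-$d$ homogeneous forms, observe that $P_d$ is an eigenvector with eigenvalue $\lambda$, and then compute $\ssp(A^{*}|_{V_d})$ by triangularizing $A$ and checking that $A^{*}$ is triangular in the lex-ordered monomial basis with diagonal entries $\alpha_1^{\beta_1}\cdots\alpha_n^{\beta_n}$; your key step (replacing a factor $x_i$ by $x_j$ with $j>i$ strictly decreases the multi-index in lex order) is sound, so the spectrum of the symmetric-power action is exactly the set of $d$-fold products of eigenvalues of $A$, which gives the claim. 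The paper instead polarizes $P_d$ to a symmetric $d$-linear form $\Phi$ via the identity $\Phi(\x_1,\ldots,\x_d) = \frac{1}{d!}\sum_{J}(-1)^{d-|J|}P_d(\sum_{j\in J}\x_j)$, deduces $\Phi(A\x_1,\ldots,A\x_d)=\lambda\Phi(\x_1,\ldots,\x_d)$, and then extracts $\lambda$ as $\prod_k \mu_{j_k}$ by evaluating at the lex-minimal tuple $(e_{j_1},\ldots,e_{j_d})$ on which $\Phi$ does not vanish. Both arguments hinge on the same two ingredients (triangularization of $A$ over $\C$ and a lex-minimality bookkeeping step), but yours determines the entire spectrum of the induced action on $V_d$ -- a clean, classical statement that makes the lemma an immediate corollary and sidesteps polarization entirely -- while the paper's is more hands-on and extracts only the single product needed. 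Your version is arguably tidier; the only caveat is that the "routine but slightly fussy" multinomial expansion you defer really is the whole content of the triangularity claim, so in a written-up version you should display it, as you have essentially done.
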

\begin{proof}[Proof of Lemma \ref{lem : semiconjugacy} ]
There exists a multilinear symmetric function $\Phi \colon (\C^n)^d \to \C$ such that for all $\x \in \C^n, P_d(\x) = \Phi(\x,\ldots,\x)$. We can use the following equivalent expression (see \cite{thomas2014polarization})
\begin{equation}\label{eq : polar expression}
\Phi(\x_1,\ldots,\x_d) = \frac{1}{d!} \sum\limits_{J \subset \{1,\ldots,d\}} (-1)^{d - |J|} P_d(\sum\limits_{j \in J}\x_j)
\end{equation}
and  $\lambda P_d = P_d \circ A$ to deduce that
\[
\Phi(A(\x_1),\ldots,A(\x_d)) = \lambda \Phi(\x_1,\ldots,\x_d).
\]
Let $(e_1,\ldots,e_n)$ be a basis of $\C^n$ such that the matrix of $A$ in this basis is upper triangular. For $j \in \{0,\ldots,n\},$ set $F_0 =\{0\}$ and $F_j = \Span(e_1,\ldots,e_j)$. Then for every $j \in \{1,\ldots,n\}$, there exist $\mu_j \in \ssp(A), \epsilon_j \in F_{j-1}$ such that
\[
A(e_j) = \mu_j e_j + \epsilon_j.
\]
We equip $\{1,\ldots,n\}^d$ with the lexicographic order. Since $P_d$ is non-vanishing, so is $\Phi$. Hence, there exists the minimum multi-index $(j_1,\ldots,j_d) \in \{1,\ldots,n\}^d$ such that $\Phi(e_{j_1},\ldots,e_{j_d}) \neq 0$. In particular, for all $(k_1,\ldots,k_d) \in \{1,\ldots,n\}^d$ 
\begin{equation}\label{eq : lexico order}
(k_1,\ldots,k_d) < (j_1,\ldots,j_d) \Rightarrow \Phi(F_{k_1} \times \ldots \times F_{k_d}) = 0.
\end{equation}
We have
\[\begin{array}{rcl}
\lambda \Phi(e_{j_1},\ldots,e_{j_d}) & = & \Phi(A(e_{j_1}),\ldots,A(e_{j_d})\\
&=& \Phi( \mu_{j_1}e_{j_1} + \epsilon_{j_1},\ldots,\mu_{j_d}e_{j_d} + \epsilon_{j_d} ).\\
\end{array}\]
Note that $\epsilon_{j_d} \in F_{j_d - 1}$. By expanding the right hand side of the last equality and using \eqref{eq : lexico order}, we have
\[
\lambda \Phi(e_{j_1},\ldots,e_{j_d}) = \prod_{k = 1}^d \mu_{j_k} \Phi(e_{j_1},\ldots,e_{j_d}).
\]
Since $\Phi(e_{j_1},\ldots,e_{j_d}) \neq 0$, we have $\lambda = \prod_{k = 1}^d \mu_{j_k}$.
\end{proof}
We can see the conclusion of Proposition \ref{prop : two smooth tangential invariant hypersurfaces} follows directly from Lemma \ref{lem : semiconjugacy}.
\end{proof}
\section{Periodic points of weakly post-critically finite all the way down}
We recall the notion of weakly PCF all the way down which was introduced in \cite{astorg2018dynamics}.
\begin{definition}
	Let $f : X \to X$ be a PCF endomorphism of connected complex manifold. Set $f_0 \coloneqq f, \Omega_0 \coloneqq X, P_0 \coloneqq PC(f,X)$. The following notions are defined inductively on $m \ge 0$ : if the restriction of $f_m$ on irreducible components of $\Omega_m$ is either unbranched covering or PCF, then the following are true.
	\begin{itemize}
		\item $\Omega_{m+1}$ is the union of $f_m$-periodic irreducible components of $P_m$ and $k_m$ is the least common multiple of the periods.
		\item $f_{m+1}$ is the restriction of $f_m^{k_m}$ on $\Omega_{m+1}$.
		\item If every irreducible component of $P_{m}$ is smooth, $P_{m+1}$ is the union of the post-critical sets of $f_{m+1}$ restricted to each irreducible component of $\Omega_{m+1}$. 
	\end{itemize}
\end{definition}
\begin{definition}\label{def : weakly all the way down}
	Using the same notations as the definition above.
	\begin{itemize}
		\item If for some $k \in \N$, the restriction of $f_m$ on each irreducible component of $\Omega_m$ is PCF for every $m \le k$, then we say $f$ is \textit{PCF of order $k+1$.}
		\item If $f$ is PCF of order $\dim X$, we say that $f$ is \textit{PCF all the way down.}
		\item If for all $k \le \dim X - 1$, the restriction of $f_m$ to each irreducible component of $\Omega_m$ is either unbranched or PCF, then we say that $f$ is \textit{weakly PCF all the way down. }
	\end{itemize}
\end{definition}
\begin{remark}\label{rm : remarks on definition of all the way down } Let $f \colon X \to X$ be a PCF endomorphism of a connected complex manifold.
	\begin{enumerate}[label = (\arabic*)]
		\item\label{itm : smooth of post-critical}The function $f$ is then PCF of order $1$ and vice versa. However, if $f$ is PCF of order $2$, then necessarily the periodic irreducible components of $PC(f)$ are smooth hypersurfaces. Note that it does not exclude the existence of singular irreducible components of $PC(f)$ which are preperiodic to a cycle of smooth periodic components. For example, the following PCF map of $\PP^2$, which is due to Rong in \cite{rong2008fatou}, $[z:w:t] \mapsto [z^d - w^{d-1}t : -w^d : -t^d], d \ge 2$ is not PCF of order $2$ since it has a periodic critical curve of period $2$ which is singular.
		\item \label{item : restriction is also all the way down }It follows from the definition that if $f$ is (weakly) PCF all the way down, then for every $k < \dim X - 1$, the restriction of $f_m$ to each irreducible component of $\Omega_m$ is also (weakly) PCF all the way down. 
	\end{enumerate}
\end{remark}
It is therefore natural to work with the restriction of an endomorphism $f \colon X \to X$ to an irreducible invariant analytic set. In our setting, we are particularly interested in the restriction of an endomorphism of $\PP^n$ on an invariant irreducible algebraic set. We employ the notion of \textit{polarized endomorphisms} of projective algebraic sets. Polarized endomorphisms are originally defined in more general settings, see \cite{Fakhruddin}. However, the extra technicalities are not needed for our proof. Hence, we use the following definition.

\begin{definition}
	Let $X$ be a smooth algebraic set of $\PP^n$. An endomorphism $f \colon X \to X$ is called a \textit{polarized endomorphism} of $X$ if there exists an endomorphism $F \colon \PP^n \to \PP^n$ of degree at least $2$ such that $F (X) = X$ and $F|_X = f$.
\end{definition}
We need to extend the main results of \cite{van2020periodic} to the case of polarized endomorphisms of smooth projective algebraic sets.
\begin{theorem}\label{thm : polarized}
	Let $f \colon X \to X$ be a polarized endomorphism of a smooth projective algebraic set $X$ and $\lambda$ be an eigenvalue of $f$ along a periodic point $z \in X$. Assume that $f$ is weakly post-critically finite, i.e. $f$ is either unbranched or post-critically finite. 
	\begin{enumerate}[label = \alph*.]
		\item \label{itm : outside} If $z \notin PC(f)$, then $|\lambda|>1$.
		\item \label{itm : tangential}If $z \in \Reg PC(f)$ and $\lambda$ is an eigenvalue of \[D_{z} f \colon T_{z} X/_{T_{z} PC(f)} \to T_{z} X/_{T_{z} PC(f)}\] then either $\lambda = 0$ or $|\lambda| >1$.
	\end{enumerate}
\end{theorem}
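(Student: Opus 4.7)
The plan is to adapt the arguments of \cite{van2020periodic}, which are stated for $X = \PP^n$, to polarized endomorphisms of smooth projective algebraic sets. The two key ingredients to extend are a Ueda-type Kobayashi hyperbolicity of the complement of the post-critical locus, and the local analysis of periodic germs preserving smooth invariant hypersurfaces developed in the previous section.

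For part (a), the first step is to establish that $U \coloneqq X \setminus PC(f)$ is Kobayashi hyperbolic. For a polarized $f$ this should follow either from a general normality theorem for polarized endomorphisms in the literature, or by exploiting a polarizing extension $F \colon \PP^n \to \PP^n$ with $F|_X = f$ to transfer Ueda's theorem from $\PP^n$ to $X$, with careful bookkeeping relating $PC(F)$ and $PC(f)$. Once hyperbolicity is in hand, I set $U_0 \coloneqq U$ and $U_{k+1} \coloneqq f^{-1}(U_k)$. Forward-invariance of $PC(f)$ gives $U_{k+1} \subset U_k$, while $C(f) \subset PC(f)$ implies that $f \colon U_{k+1} \to U_k$ is an unramified covering of Kobayashi-hyperbolic manifolds. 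The standard identity $f^{*} k_{U_k} = k_{U_{k+1}} \geq k_{U_k}|_{U_{k+1}}$ for Kobayashi pseudo-metrics then shows that $f$ strictly expands $k_{U}$ along every orbit contained in $U$, and iterating $m$ times at $z$ yields $|\lambda| > 1$ for every eigenvalue of $D_z f^{\circ m}$.

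For part (b), I work locally near the cycle $\{z, f(z), \ldots, f^{m-1}(z)\} \subset \Reg PC(f)$. Let $\Gamma_i$ denote the germ of $PC(f)$ at $f^{\circ i}(z)$; each $\Gamma_i$ is a smooth hypersurface and $f(\Gamma_i) \subset \Gamma_{i+1 \bmod m}$. I argue by dichotomy. If at some $f^{\circ i}(z)$ there exists a non-constant holomorphic germ $\gamma \colon (\C, 0) \to (X, f^{\circ i}(z))$ with $\gamma \not\subset \Gamma_i$ but $f \circ \gamma \subset \Gamma_{i+1 \bmod m}$, then Proposition \ref{prop : preperiodic germs} applied at $f^{\circ i}(z)$ forces the transverse multiplier of $D_{f^{\circ i}(z)} f$ to vanish; since $\lambda$ is the product of transverse multipliers around the cycle, this gives $\lambda = 0$. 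Otherwise, small punctured transverse disks at $z$ lie in $U$, and a normal-family argument applied to the local inverse branches of $f^{\circ m}$ fixing $z$, leveraging the Kobayashi hyperbolicity of $U$ exactly as in part (a), forces the transverse multiplier of each inverse branch to have modulus strictly less than $1$, whence $|\lambda| > 1$.

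The main obstacle is establishing the Ueda-type Kobayashi hyperbolicity of $X \setminus PC(f)$ in the polarized setting. Ueda's classical proof relies on features specific to $\PP^n$, and extending it to $X$ requires either invoking a general normality theorem for polarized endomorphisms available in the literature, or performing a careful comparison between the critical and post-critical loci of $F$ and of $f = F|_X$ in order to reduce to Ueda's theorem on $\PP^n$; either route requires some care because $PC(F)$ and $PC(f)$ are not straightforwardly related.
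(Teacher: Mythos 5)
Your strategy hinges on establishing that $U = X \setminus PC(f)$ is Kobayashi hyperbolic, and you correctly flag this as the main obstacle; unfortunately it is not merely an obstacle but a false statement in general, so the whole route collapses. Already for $X=\PP^1$ and $f(z)=z^d$ one has $PC(f)=\{0,\infty\}$ and the complement $\C^*$ is not hyperbolic; in higher dimension, for $f([z:w:t])=[z^d:w^d:t^d]$ the set $PC(f)$ is the union of the three coordinate lines and $\PP^2\setminus PC(f)\cong(\C^*)^2$ is again not Kobayashi hyperbolic, even though every periodic point off $PC(f)$ is repelling. Ueda-type hyperbolicity of the complement of the post-critical set holds only under extra ``strictness'' hypotheses that are not assumed here. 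There is a secondary problem as well: even where hyperbolicity holds, the inequality $k_{U_{k+1}}\geq k_{U_k}|_{U_{k+1}}$ only yields $|\lambda|\geq 1$; the strict decrease of the Kobayashi infinitesimal metric under a strict open inclusion, which is what you would need for $|\lambda|>1$, is a one-dimensional phenomenon (maximum principle for curvature) and is not automatic in higher dimension.

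The paper's proof avoids hyperbolicity altogether. One lifts to the cone over $X$ via a regular homogeneous polynomial extension $F$, passes to the universal cover $\pi\colon\tilde X\to X^*$, and lifts the inverse branch fixing $z$ to a map $g$ with $\lambda^{-1}\in\ssp(D_{[z]}g)$. Normality of $\{g^{\circ j}\}$ is obtained not from hyperbolicity of $X^*$ but from the fact that limits of the inverse branches stay in $X^*$ (Hurwitz plus closedness of $X$), which gives $|\lambda|\geq 1$; the case $|\lambda|=1$ is then excluded by constructing a semi-linearizable center manifold, extracting a disk $\D(0,R)$ on which $g$ acts as an irrational rotation, and showing that almost-everywhere radial limits of the associated map $\tau$ must land in $PC(f)$, forcing $z\in PC(f)$ (or an outright contradiction in the unbranched case). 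Part (b) is handled by the same scheme using the relative $(X^*,z)$-homotopy of \cite{van2020periodic}, not by the dichotomy you propose: your first branch (applying Proposition \ref{prop : preperiodic germs} to get $\lambda=0$) is sound as far as it goes, but the ``otherwise'' branch again leans on the unavailable hyperbolicity, and the negation of the existence of such a germ $\gamma$ does not by itself supply the normal-families input you need.
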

The proof follows by almost the same proof of \cite[Theorem 1.3, Proposition 4.1]{van2020periodic}, except for a mild modification in the first case \ref{itm : outside} when $f$ is unbranched. For the sake of completeness, we include here the strategy and the readers are invited to \cite{van2020periodic} for more details.
\begin{proof}[Sketch of the proof] By lifting to homogeneous coordinates in $\C^n$, we can assume that: $X$ is an algebraic cone in $\C^n$ for some $n \ge 2$ such that $X \setminus \{0\}$ is smooth and $z \in X \setminus \{0\}$, there exists a regular homogeneous polynomial endomorphism $F \colon \C^n \to \C^n$, i.e. $F^{-1}(0) = \{0\}$, such that $F(X) = X, F|_{X} = f$. In particular, $f$ inherits several important properties of $F$ as a regular polynomial endomorphism, see \cite[Proposition 3.2]{van2020periodic}.
	
Assume that $ z \notin PC(f)$. Note that if $f$ is unbranched then $PC(f) = \emptyset$. We proceed by the following steps.

\noindent \textbf{Step 1.} Set $X^* = X \setminus PC(f)$ and let $ \pi \colon \tilde{X} \to X^*$ be the universal covering. Since $f \colon 
f^{-1}(X^*) \to X^*$ is a covering, by homotopy lifting property, there exists a map $g \colon \tilde{X} \to \tilde{ X}$ such that 
\[
f \circ \pi \circ g = \pi
\]
and $g$ fixes a point $[z]$ such that $\pi([z]) = z$. The point $[z]$ represents the homotopy class of constant paths in $X^*$ at $z$ and locally $g$ is the continuation along paths of the unique inverse branch of $f$ which fixes $z$. In particular, $g$ is holomorphic and $\lambda^{-1}$ is an eigenvalue of $D_{[z]} g$. 

\noindent \textbf{Step 2.} The family $\{ g^{\circ j} \colon \tilde{X} \to \tilde{X} \}$ is normal. This can be proved thanks to the following observation. Let $U \subset X^*$ be an open simply connected neighborhood of $z$ and $h_j \colon U \to X^*, j \geq 1$ be the local inverse branch of $f^{\circ j}$ fixing $z$, then every limit map $h = \lim\limits_{k \to \infty} h_{j_k}$ takes values in $X^*$. Indeed, since $X$ is closed, $h(U) \subset X$. If $f$ is unbranched then $X^* = X$ and hence $h(U) \subset X^*$. If $f$ is post-critically finite then $h(U) \subset X^*$ by Hurwitz's theorem, see \cite[Lemma 3.5]{van2020periodic}. The first consequence of this fact is that $|\lambda| \geq 1$. We shall proceed by contradiction. Suppose that $|\lambda| = 1$.

\noindent \textbf{Step 3.} The normality of $\{g^{\circ j}, j \geq 1\}$ and the assumption $|\lambda| = 1$ imply the existence of a center manifold $M$ in $\tilde{X}$ which is semi-linearizable. More precisely, 
\begin{itemize}
	\item $M \neq \emptyset$ is a closed complex submanifold of $\tilde{X}$.
	\item $g|_{M} \colon M \to M$ is an automorphism.
	\item $[z] \in M$ and $T_{[z]} M$ is the sum of eigenspaces associated to eigenvalues of $D_{z} f$ which are of modulus $1$.
	\item $\pi(M)$ is bounded in $\C^n$.
	\item There exists a holomorphic map $\Phi \colon M \to T_{[z]}M$ such that $\Phi([z]) = 0, D_{[z]} \Phi = Id$ and 
	\[
	D_{[z]} g \circ \Phi = \Phi \circ g.
	\]
\end{itemize}
In particular, $\lambda$ is not a root of unity.

\noindent \textbf{Step 4.} Let $v \in T_{[z]} M$ be an associated eigenvector of $\lambda$ and $\C v$ the complex line of direction $v$ in $T_{[z]} M$. Denote by $\Gamma$ the irreducible component of $\Phi^{-1}(\C v)$ containing $[z]$. Then we can prove that $\Gamma$ is smooth and the map 
\[
\Phi|_{\Gamma} \colon \Gamma \to \Phi(\Gamma) = \D(0,R)
\]
is a biholomorphism with $R \in (0, +\infty)$. Let us sum up the construction by the following diagram.
\[\begin{tikzcd}        
\D(0,R) \arrow[rrrr, bend left=20, "\tau  = \pi \circ (\Phi|_{\Gamma})^{-1}"] \arrow[d, "\lambda^{-1}v"'] &\Gamma \subset  T_{[z]} M  \arrow[l, "\Phi"] \arrow[d, "D_{[z]}g"']& M \arrow[l, "\Phi"] \arrow[hook, r, ] \arrow[d,"g"'] & \tilde{X} \arrow[r, "\pi"] \arrow[d, "g"'] & X^*\\
\D(0,R) &\Gamma   \subset T_{[z]} M \arrow[l, "\Phi"] & M \arrow[l, "\Phi"] \arrow[hook, r, ] & \tilde{X} \arrow[r, "\pi"] & f^{-1}(X^*) \subset X^* \arrow[u, "f"]\\
\end{tikzcd}\]

\noindent \textbf{Step 5. } Set $\tau = \pi \circ (\Phi|_{\Gamma})^{-1} \colon \D(0,R) \to \pi(M)$. Since $\pi(M)$ is bounded in $\C^n$, the radial limit 
\[
\tau_\theta = \lim\limits_{r \to R^-} \tau(r e^{i \theta})
\]
exists for almost every $\theta \in [0, 2\pi)$. It follows from \cite[Proposition 3.17]{van2020periodic} that if $\tau_\theta$ exists then $\tau_\theta \in PC(f) = X \setminus X^*$. We have two cases.
\begin{itemize}
	\item If $f$ is unbranched then $PC(f) = \emptyset$, i.e. $\tau_\theta$ cannot exist. This yields a contradiction.
	\item If $PC(f) \neq \emptyset$ then $PC(f)$ is defined by a holomorphic map $Q \colon X \to \C$. The map $Q \circ \tau$ has vanishing radial limit almost everywhere on the boundary, hence $Q \circ \tau \equiv 0$. In particular, $Q \circ \tau(0) =0$, which implies that $z \in PC(f)$. This is a contradiction to the assumption that $z \notin PC(f)$. 
\end{itemize}
Hence the proof is complete for the case $z \notin PC(f)$. The second case follows the same lines using the $(X^*,z)$-- homotopy introduced in \cite[$\S$4]{van2020periodic}.
\end{proof}
We are ready to the main theorem about eigenvalues of polarized endomorphisms which are weakly PCF all the way down.
\begin{theorem}
	Let $f \colon X \to X$ be a polarized endomorphism of a smooth projective algebraic set $X$ and $\lambda$ be an eigenvalue of $f$ along a periodic cycle. If $f$ is weakly post-critically finite all the way down, then either $\lambda = 0$ or $|\lambda|>1$. 
\end{theorem}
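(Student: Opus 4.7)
The plan is to argue by induction on $n = \dim X$. In the base case $n = 1$, a smooth projective algebraic set carrying a polarized endomorphism of degree $\geq 2$ is either $\PP^1$ or an elliptic curve, and the conclusion is classical in both situations. For the inductive step, assume the statement in all dimensions $<n$, let $f \colon X \to X$ be polarized and weakly PCF all the way down with $\dim X = n$, and let $\lambda$ be an eigenvalue of $D_z f^m$ at a periodic point $z$ of period $m$. Replacing $f$ by a suitable iterate $f^M$ where $M$ is a common multiple of $m$ and of the periods of the periodic irreducible components of $PC(f)$ through $z$, I may assume $z$ is a fixed point of $f$ and every periodic irreducible component of $PC(f)$ through $z$ is $f$-invariant. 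This reduction is harmless: $PC(f^M) = PC(f)$, the weakly PCF all the way down structure is preserved (the sequences $\Omega_{\bullet}, f_{\bullet}$ for $f^M$ are obtained from those of $f$ by suitable iteration), and the dichotomy ``$\lambda = 0$ or $|\lambda|>1$'' passes between $D_z f^m$ and $D_z f^M = (D_z f^m)^{M/m}$.

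If $z \notin PC(f)$, Theorem~\ref{thm : polarized}\,\ref{itm : outside} yields $|\lambda|>1$. Otherwise let $V_1, \dots, V_r$ denote the (smooth, by Remark~\ref{rm : remarks on definition of all the way down }\ref{itm : smooth of post-critical}) fixed periodic irreducible components of $PC(f)$ through $z$; since any preperiodic component of $PC(f)$ through the fixed point $z$ has some forward iterate equal to a periodic component still containing $z$, one has $r \geq 1$. Fix $V = V_1$. The derivative $D_z f$ preserves $T_z V$, and its eigenvalues split into the $n-1$ tangential eigenvalues of $D_z f|_{T_z V}$ and a single transverse eigenvalue $\lambda_V$ acting on the line $T_z X / T_z V$. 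Since $f|_V \colon V \to V$ is a polarized endomorphism of the $(n-1)$-dimensional smooth projective algebraic set $V$ and is weakly PCF all the way down by Remark~\ref{rm : remarks on definition of all the way down }\ref{item : restriction is also all the way down }, the inductive hypothesis handles every tangential eigenvalue, and it only remains to control $\lambda_V$.

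If $z \in \Reg PC(f)$, Theorem~\ref{thm : polarized}\,\ref{itm : tangential} applies directly to $\lambda_V$. Otherwise some second irreducible component of $PC(f)$ passes through $z$, and three sub-configurations arise. First, if there is another fixed periodic component $V'$ with $T_z V' \neq T_z V$, the $D_z f$-equivariant surjection $T_z V' \twoheadrightarrow T_z X / T_z V$ (with invariant kernel $T_z V \cap T_z V'$) realizes $\lambda_V$ as an eigenvalue of $D_z f|_{T_z V'}$, and the inductive hypothesis applied to $f|_{V'}$ concludes. Second, if there is another fixed periodic component $V'$ with $T_z V' = T_z V$, Proposition~\ref{prop : two smooth tangential invariant hypersurfaces} expresses $\lambda_V$ as a product of eigenvalues of $D_z f|_{T_z V}$, each of which is $0$ or of modulus $>1$ by induction. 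Third, if $V$ is the only fixed periodic component through $z$ (so $r=1$) but a preperiodic component $W$ passes through $z$, then the forward orbit of $W$ must eventually enter $V$; choosing $k \geq 1$ minimal with $f^k(W) \subseteq V$, any non-constant curve germ $\gamma$ in the irreducible analytic set $f^{k-1}(W) \neq V$ not lying in $V$ satisfies $f \circ \gamma \subset V$, and Proposition~\ref{prop : preperiodic germs} forces $\lambda_V = 0$.

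The main difficulty is neither the local algebra nor the global dynamical input, which are already packaged in Propositions~\ref{prop : preperiodic germs}, \ref{prop : two smooth tangential invariant hypersurfaces}, and Theorem~\ref{thm : polarized}. It lies in the combinatorial bookkeeping: verifying that the trichotomy above is exhaustive at any singular point of $PC(f)$, and that the chosen companion component of $PC(f)$ through $z$ meets all the hypotheses required to feed into the corresponding proposition.
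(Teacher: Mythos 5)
Your proposal is correct and follows essentially the same route as the paper: induction on $\dim X$, reduction to an invariant fixed-point configuration, Theorem~\ref{thm : polarized} for the cases $z \notin PC(f)$ and $z \in \Reg PC(f)$, and the same transverse/tangent/preperiodic trichotomy at singular points fed into Propositions~\ref{prop : two smooth tangential invariant hypersurfaces} and~\ref{prop : preperiodic germs}. The only point you assert rather than prove is the existence of the curve germ $\gamma$ through $z$ in $f^{k-1}(W)$ not contained in $V$ (the set $f^{k-1}(W)$ may be singular at $z$); the paper supplies this via a proper orthogonal projection and a Puiseux parameterization, but this is a standard fact and does not change the argument.
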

\begin{proof} We proceed by induction on the dimension of $X$. If $\dim X = 1$, by Riemann -- Hurwitz formula, $X$ has genus zero or one. In both cases, the theorem is classical (see \cite{milnor2011dynamics} or \cite[$\S$ 5]{van2020periodic}). Assume that the theorem is true for all polarized weakly post-critically finite all the way down endomorphisms of smooth projective algebraic sets of dimension at most $k - 1$. Replacing $f$ by a high enough iterate, we can assume that $\lambda$ is an eigenvalue of $f$ at a fixed point $z \in X$ and every periodic irreducible component of $PC(f)$ is invariant. We want to prove that 
\[
\ssp D_{z} f \subset \overline{\D}^c \cup \{0\}
\]
where $\overline{\D}^c = \{ \lambda \in \C \mid |\lambda|>1 \}.$

Thanks to Theorem \ref{thm : polarized}, we can assume that $z \in PC(f)$ and let $\Gamma$ be an invariant irreducible component of $PC(f)$ containing $z$. By Remark \ref{rm : remarks on definition of all the way down } \ref{itm : smooth of post-critical}, $\Gamma$ is a smooth hypersurface in $X$. By induction hypothesis and Remark \ref{rm : remarks on definition of all the way down } \ref{item : restriction is also all the way down }, we have 

\begin{equation}\label{eq : spec tangential}
\ssp D_{z}f|_{T_{z} \Gamma} \subset \overline{\D}^c \cup \{0\}.
\end{equation}
If $z \in \Reg PC(f)$, then the result is proved by using Theorem \ref{thm : polarized} \ref{itm : tangential} and \eqref{eq : spec tangential}. If $z \in \Sing PC(f)$, then $z$ belongs to the intersection of $\Gamma$ with another irreducible component $\Sigma$ of $PC(f)$. Note that $\Sigma$ is not necessarily in $\Omega_1$. In particular, $\Sigma$ can be singular at $z$. By passing to some iterate, we have three cases.
\begin{enumerate}[label = \arabic*.]
	\item\label{itm : case transverse} $\Sigma \subset \Omega_1$; $\Sigma$ and $\Gamma$ intersect transversally, i.e. $T_{z} \Gamma \neq T_{z} \Sigma$. Then, obviously $\ssp(D_{z} f) = \ssp(D_{z}f|_{T_z \Gamma}) \cup \ssp(D_{z}f|_{T_z \Sigma})$, hence the results follows from induction hypothesis.
	\item\label{itm : case tangent} $\Sigma \subset \Omega_1$; $\Sigma$ and $\Gamma$ intersect tangentially, i.e. $T_{z} \Gamma = T_{z} \Sigma$. We can conclude by applying Proposition \ref{prop : two smooth tangential invariant hypersurfaces} and \eqref{eq : spec tangential}
	\item\label{itm : case preperiodic} $f(\Sigma) = \Gamma$. We wish to apply Proposition \ref{prop : preperiodic germs}. More precisely, we need a holomorphic $\gamma \colon (\C,0) \to (\Sigma,z)$ such that $\gamma((\C,0)) \not\subset \Gamma$. In order to obtain such a map, we choose an open neighborhood of $z$ and a local coordinate $(x_1,\ldots,x_{\dim X})$ centered at $z$ and $\Gamma \cap U = \{x_1 = 0\}$. Let $E$ be a hyperplane passing through $z$ so that the orthogonal projection $\sigma \colon \Sigma \to E$ is proper (see \cite[Lemma 1, $\S$ 3.4]{chirka2012complex}). Then $\sigma(\Sigma \cap \Gamma)$ is an analytic set in $E$. In particular, $\sigma(\Sigma \cap \Gamma)$ is nowhere dense in $E$. Hence, we can find a complex line $\mathfrak{l}$ such that $\mathfrak{l} \not\subset \sigma(\Sigma \cap \Gamma)$. Let $\Delta$ be an irreducible component of $\sigma^{-1}(\mathfrak{l})$ containing $z$. Then $\Delta \not\subset \Gamma$. Let $\gamma \colon (\D,0) \to (\Delta,z)$ be the Puiseau parameterization. Then $\gamma((\C,0)) \not\subset \Gamma$ and $f\circ \gamma ((\C,0)) \subset (f(\Sigma),z) = (\Gamma,z)$. Hence, we can apply Proposition \ref{prop : preperiodic germs} and conclude the proof by using \eqref{eq : spec tangential}.
\end{enumerate}
\end{proof}
Since every endomorphism of $\PP^n$ is polarized by definition, Theorem \ref{ thm : CPn} follows immediately from Theorem \ref{thm : polarized}. Finally, let us consider the conjecture for a PCF map $f$ constructed by Sarah Koch in \cite{koch2013teichmuller}. A remarkable property of $f$ is that $PC(f)$ is a union of projective hyperplanes. Consequently, $f$ is PCF all the way down (see \cite[Corollary 1]{astorg2018dynamics}). Thus, we can apply Theorem \ref{ thm : CPn} and the conjecture is proved for the map $f$. 
%

\bibliography{ref}

\begin{thebibliography}{{Fak}03}

\bibitem[Ast20]{astorg2018dynamics}
Matthieu Astorg.
\newblock Dynamics of post-critically finite maps in higher dimension.
\newblock {\em Ergodic Theory and Dynamical Systems}, 40(2):289–308, 2020.

\bibitem[Chi12]{chirka2012complex}
Evgeni~Mikhailovich Chirka.
\newblock {\em Complex analytic sets}, volume~46.
\newblock Springer Science \& Business Media, 2012.

\bibitem[{Fak}03]{Fakhruddin}
Najmuddin {Fakhruddin}.
\newblock {Questions on self maps of algebraic varieties.}
\newblock {\em {J. Ramanujan Math. Soc.}}, 18(2):109--122, 2003.

\bibitem[Hug]{valentin_prep}
Valentin Huguin.
\newblock Dynamics of {K}och postcritically finite endomorphisms.
\newblock {\em In preparation}.

\bibitem[Huy06]{huybrechts2006complex}
Daniel Huybrechts.
\newblock {\em Complex geometry: an introduction}.
\newblock Springer Science \& Business Media, 2006.

\bibitem[Koc13]{koch2013teichmuller}
Sarah Koch.
\newblock Teichm{\"u}ller theory and critically finite endomorphisms.
\newblock {\em Advances in Mathematics}, 248:573--617, 2013.

\bibitem[Le20]{van2020periodic}
Van~Tu Le.
\newblock Periodic points of post-critically algebraic holomorphic
  endomorphisms.
\newblock {\em Ergodic Theory and Dynamical Systems}, pages 1--33, 2020.

\bibitem[Le22]{le2022fixed}
Van~Tu Le.
\newblock Fixed points of koch maps.
\newblock {\em Conformal Geometry and Dynamics of the American Mathematical
  Society}, 26(02):10--33, 2022.

\bibitem[Mil11]{milnor2011dynamics}
John Milnor.
\newblock {\em Dynamics in One Complex Variable.(AM-160):(AM-160)}.
\newblock Princeton University Press, 2011.

\bibitem[Ron08]{rong2008fatou}
Feng Rong.
\newblock The fatou set for critically finite maps.
\newblock {\em Proceedings of the American Mathematical Society},
  136(10):3621--3625, 2008.

\bibitem[Tho14]{thomas2014polarization}
Erik~GF Thomas.
\newblock A polarization identity for multilinear maps.
\newblock {\em Indagationes Mathematicae}, 25(3):468--474, 2014.

\end{thebibliography}
\bibliographystyle{alpha}
\end{document}